\newtheorem{corollary}{Corollary}
\newtheorem{definition}{Definition}
\newtheorem{lemma}{Lemma}
\newtheorem{proposition}{Proposition}
\newtheorem{theorem}{Theorem}
\numberwithin{equation}{section}
\begin{document}
\title[some characterizations on conformal vector fields]{Some characterizations of Riemannian manifolds endowed  with a conformal vector fields}
\author{A. Barros$^{1}$, I. Evangelista$^{2}$ and E.Viana$^3$}

\address[A. Barros]{Universidade Federal do Cear\'{a} - UFC, Departamento  de Matem\'atica, Campus do Pici, Av. Humberto Monte, Bloco 914,
	60455-760, Fortaleza / CE, Brazil.}
\email{abbarros@mat.ufc.br}

\address[I. Evangelista] {Universidade Federal do Delta do Parna\'iba- UFDPar, Curso de Matem\'atica, Av. S\~ao Sebasti\~ ao, 2819, 64202-020, Parna\'iba /PI, Brazil}
\email{evangelistaisrael@ufdpar.edu.br.}

\address[E. Viana]{Instituto Federal de Educa\c c\~ao, Ci\^encia e Tecnologia do Cear\'a - Campus Caucaia\\
	61609-090 Caucaia/CE, Brazil.}
\email{emanuel.mendonca@ifce.edu.br}

\subjclass[2010]{Primary: 53C20, 53A30}
\keywords{Conformal gradient vector fields; cosmic no-hair conjecture; manifolds with boundary; hemisphere of the Euclidean sphere}

\begin{abstract}
The aim of this article is to investigate the presence of a conformal vector $\xi$ with conformal factor $\rho$ on a compact Riemannian manifold $M$ with or without boundary $\partial M$.  We firstly prove that a compact Riemannian manifold $(M^n, g)\,,n \geq 3,$ with constant scalar curvature, with boundary $\partial M$ totally geodesic, in such way that the traceless Ricci curvature is zero in the  direction of $\nabla \rho,$ is isometric to a standard hemisphere. In the $4$-dimensional case, under the condition $\displaystyle\int_M|\mathring{Ric}|^2\langle \xi,\nabla \rho\rangle \,dM\leq0$, we show that, either $M$ is isometric to a standard sphere, or $M$ is isometric to a standard hemisphere. Finally, we give a partial answer for the cosmic no-hair conjecture.
\end{abstract}

\maketitle
\section{Introduction}
 
Conformal mappings and conformal vector fields were intensively studied during the last 150 years, in particular in Riemannian geometry of dimension $n\geq 3$. Namely, there have been notable findings concerning the characterization of an $n$-sphere $\mathbb{S}^n(c)$ as well as an hemisphere $\mathbb{S}^n_+(c),$ for some positive constant $c,$ utilizing a non-Killing conformal vector field, as documented in \cite{Deshmukh, deshmuk, deshmuk4, deshmuk5, deshmuk1,deshmuk3, ejiri, evviana} and \cite{evangelista}.

Before we proceed, given an $n$-dimensional Riemannian manifold $(M^n, g)$ with Levi–Civita connection $\nabla$, we recall that a conformal vector field $\xi$ on $M$ is said to be closed if the $1$-form $\xi^{\flat}$ is closed. This is easily seen to be equivalent to the existence of a smooth function $\rho:M\to \mathbb{R},$ called the conformal factor of $\xi,$ such that $$\nabla_{X}\xi=\rho X,$$ for all $X\in \mathfrak{X}(M)$. Whence we have  $$\rho=\dfrac{1}{n}{\rm div}\,\xi,$$ where ${\rm div}$ is the divergence operators on $M$. Henceforth, we also use the notation $\langle \,,\,\rangle$ for the metric $g$ or inner product induced by $g$ on tensor spaces.

It's important to note that closed vector fields are also referred to as concircular vector fields. These fields have surfaced in the examination of conformal mappings that preserve geodesic circles and boast intriguing applications in general relativity (cf. \cite{takeno}). The conformal nature of a vector field remains unchanged under a conformal alteration of the metric. However, a closed conformal vector field remains closed only if the conformal alteration maintains a constant conformal factor; in such instances, the factor of the field remains unaltered. Moreover, a closed and conformal vector field $X$ is labelled parallel if $\rho$ vanishes entirely. We highlight that numerous manifolds host non parallel closed; certain existence outcomes and explicit examples are detailed in \cite{caminha} and \cite{kr}.

After these preliminary remarks we may state our main results as follows.

\begin{theorem}\label{thm1}
Let $\xi$ be a non trivial conformal vector field with conformal factor $\rho$ on a compact Riemannian manifold $(M^n, g)$ with smooth totally geodesic boundary $\partial M$  of dimension $n \geq 3 $ and $\rho|_{\partial M}=0$. If the scalar curvature $R$ of $(M^{n}, g)$ is constant and $\mathring{Ric}(\nabla \rho)=0,$ then M is isometric to a hemisphere $\mathbb{S}^n_+(c)$ for some positive constant $c.$
\end{theorem}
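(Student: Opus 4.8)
\medskip
\noindent The plan is to show that the conformal factor $\rho$ satisfies an Obata equation $\nabla^{2}\rho=-c\,\rho\,g$ with $c>0$, and then to invoke the characterization of the Euclidean hemisphere by means of such an equation on a manifold with boundary. First I would record the pointwise consequences of $\nabla_{X}\xi=\rho X$: differentiating once more and using the definition of the curvature tensor gives $R(X,Y)\xi=(X\rho)\,Y-(Y\rho)\,X$, and tracing yields $\operatorname{Ric}(\xi)=-(n-1)\nabla\rho$. Taking the divergence of this identity and using the contracted second Bianchi identity, the constancy of $R$, and $\operatorname{div}\xi=n\rho$, one obtains $\Delta\rho=-\tfrac{R}{n-1}\rho$. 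Also, the hypothesis $\mathring{Ric}(\nabla\rho)=0$ is exactly $\operatorname{Ric}(\nabla\rho,\nabla\rho)=\tfrac{R}{n}|\nabla\rho|^{2}$, and since $\xi$ is non-trivial we have $\rho\not\equiv0$, which together with $\rho|_{\partial M}=0$ makes $\rho$ non-constant.

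Next I would run a Bochner argument. From
$$\tfrac12\Delta|\nabla\rho|^{2}=\big|\nabla^{2}\rho\big|^{2}+\langle\nabla\rho,\nabla\Delta\rho\rangle+\operatorname{Ric}(\nabla\rho,\nabla\rho),$$
substituting $\Delta\rho=-\tfrac{R}{n-1}\rho$, the identity $\operatorname{Ric}(\nabla\rho,\nabla\rho)=\tfrac{R}{n}|\nabla\rho|^{2}$, and $|\nabla^{2}\rho|^{2}=\big|\nabla^{2}\rho-\tfrac{\Delta\rho}{n}g\big|^{2}+\tfrac{(\Delta\rho)^{2}}{n}$, one gets
$$\tfrac12\Delta|\nabla\rho|^{2}=\Big|\nabla^{2}\rho-\tfrac{\Delta\rho}{n}g\Big|^{2}+\tfrac{R^{2}}{n(n-1)^{2}}\rho^{2}-\tfrac{R}{n(n-1)}|\nabla\rho|^{2}.$$
Integrating over $M$ and using $\int_{M}|\nabla\rho|^{2}\,dM=\tfrac{R}{n-1}\int_{M}\rho^{2}\,dM$ (integration by parts, the boundary contribution vanishing since $\rho|_{\partial M}=0$), the two zeroth-order bulk terms cancel and one is left with
$$\tfrac12\int_{\partial M}\partial_{\nu}|\nabla\rho|^{2}\,dA=\int_{M}\Big|\nabla^{2}\rho-\tfrac{\Delta\rho}{n}g\Big|^{2}\,dM,$$
$\nu$ being the outward unit normal of $\partial M$.

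It then remains to see that the boundary integral vanishes, and this is where the totally geodesic hypothesis enters. Since $\rho|_{\partial M}=0$, the tangential gradient of $\rho$ along $\partial M$ is zero, so $\nabla\rho=(\partial_{\nu}\rho)\nu$ on $\partial M$ and $\partial_{\nu}|\nabla\rho|^{2}=2(\partial_{\nu}\rho)\,\nabla^{2}\rho(\nu,\nu)$. For $X,Y$ tangent to $\partial M$, the Gauss formula and the totally geodesic condition give $\nabla^{2}\rho(X,Y)=\nabla^{2}_{\partial M}(\rho|_{\partial M})(X,Y)=0$ on $\partial M$, so the tangential trace of $\nabla^{2}\rho$ vanishes there; hence $\nabla^{2}\rho(\nu,\nu)=\Delta\rho|_{\partial M}=-\tfrac{R}{n-1}\rho|_{\partial M}=0$ and the boundary term is zero. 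Consequently $\nabla^{2}\rho-\tfrac{\Delta\rho}{n}g\equiv0$, i.e. $\nabla^{2}\rho=-\tfrac{R}{n(n-1)}\rho\,g$; moreover $R>0$ because $\int_{M}|\nabla\rho|^{2}\,dM=\tfrac{R}{n-1}\int_{M}\rho^{2}\,dM>0$. Setting $c=\tfrac{R}{n(n-1)}>0$, the non-constant $\rho$ solves $\nabla^{2}\rho+c\rho\,g=0$ with $\rho|_{\partial M}=0$, so the classical Obata-type theorem for compact manifolds with boundary forces $(M,g)$ to be isometric to $\mathbb{S}^{n}_{+}(c)$.

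The genuinely delicate point I anticipate is the boundary analysis: one must use $\rho|_{\partial M}=0$ and the totally geodesic condition \emph{together} to make $\partial_{\nu}|\nabla\rho|^{2}$ vanish identically on $\partial M$ (rather than merely have a favorable sign), since that is exactly what lets the integrated Bochner identity close. One should also take care that the Obata-with-boundary statement invoked applies here --- in particular that $\partial M=\{\rho=0\}$ and $\rho$ has one sign in the interior, which should follow from the connectedness of $M$.
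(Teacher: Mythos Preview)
Your overall strategy---Bochner identity, control of the boundary term using $\rho|_{\partial M}=0$ together with the totally geodesic hypothesis, and then Reilly/Obata---is the same as the paper's. One point to correct: you open by writing ``the pointwise consequences of $\nabla_{X}\xi=\rho X$'', i.e.\ you are tacitly assuming $\xi$ is \emph{closed}, which the theorem does not state. Your derivations of $\operatorname{Ric}(\xi)=-(n-1)\nabla\rho$ and of $\Delta\rho=-\tfrac{R}{n-1}\rho$ therefore do not apply as written. The fix is painless: for any conformal vector field with constant scalar curvature one has $\Delta\rho=-\tfrac{R}{n-1}\rho$ (this is classical; the paper quotes it as \eqref{eqautofuncrho}, coming from Hwang--Yun/Yano), and that identity is all your Bochner argument actually uses. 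After that substitution your computation goes through verbatim.

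A small comparison worth noting: your boundary step is \emph{pointwise}---you show directly that $\nabla^{2}\rho(\nu,\nu)=\Delta\rho|_{\partial M}=0$ because the tangential Hessian vanishes when $\rho|_{\partial M}=0$ and $\partial M$ is totally geodesic. The paper instead proves the more general integral identity of Proposition~\ref{propbochner} (valid without the hypothesis $\mathring{Ric}(\nabla\rho)=0$) by computing the boundary term two different ways and comparing coefficients; only afterwards does it insert the hypothesis. Your route is shorter for this particular theorem, while the paper's route yields a reusable proposition (indeed, Proposition~\ref{propbochner} is invoked again in the proof of Theorem~\ref{thm3}). Finally, the ``Obata-type theorem for compact manifolds with boundary'' you allude to at the end is exactly Reilly's Theorem~B, which is what the paper cites; the sign/connectedness worries in your last paragraph are unnecessary for that statement.
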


Next, we present two rigidity results for the $4$-dimensional case.

\begin{theorem}\label{thm2}
	Let $(M^4,g)$ be a compact Riemannian manifold  with constant scalar curvature and $\xi$ be a non trivial conformal vector field on  $M^4$ with conformal factor $\rho$. If $\displaystyle\int_M|\mathring{Ric}|^2\langle \xi,\nabla \rho\rangle \,dM\leq0$, then $M^4$ is isometric to a standard sphere $\mathbb{S}^4(c)$ for some positive constant $c.$ 
\end{theorem}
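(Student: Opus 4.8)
The plan is to prove that the hypothesis is in fact equivalent to the Obata equation $\nabla^{2}\rho=-\tfrac{R}{12}\rho\,g$ for the conformal factor, and then to conclude with Obata's rigidity theorem. Write $\mathcal{L}_\xi g=2\rho g$, so that $\operatorname{div}\xi=4\rho$ and, $M^{4}$ being closed, $\int_M\rho\,dM=0$; since $\xi$ is non-trivial, hence not Killing, $\rho\not\equiv0$ and so $\rho$ is non-constant. The first step is to differentiate the standard conformal transformation law of the Ricci tensor along the flow of $\xi$, which yields the pointwise identity $\mathcal{L}_\xi\operatorname{Ric}=-(n-2)\nabla^{2}\rho-(\Delta\rho)g$ in dimension $n$. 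Taking the $g$-trace and using $\mathcal{L}_\xi g^{-1}=-2\rho\,g^{-1}$ together with $\xi(R)=0$ gives $2\rho R=-(2n-2)\Delta\rho$, i.e.\ $\Delta\rho=-\tfrac{R}{n-1}\rho$; since $\rho$ is non-constant with zero mean this forces $R>0$. Subtracting $\tfrac{R}{n}\mathcal{L}_\xi g$ then produces
\[
\mathcal{L}_\xi\mathring{Ric}=-(n-2)\Bigl(\nabla^{2}\rho+\tfrac{R}{n(n-1)}\rho\,g\Bigr),
\]
i.e.\ $-(n-2)$ times the trace-free Hessian of $\rho$. Getting this first step right, especially the trace bookkeeping, is the only genuinely delicate point.

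Next, using $\mathcal{L}_\xi g^{-1}=-2\rho\,g^{-1}$ and that $\mathring{Ric}$ is $g$-trace-free, one computes
\[
\xi\bigl(|\mathring{Ric}|^{2}\bigr)=\mathcal{L}_\xi|\mathring{Ric}|^{2}=-4\rho|\mathring{Ric}|^{2}+2\langle\mathcal{L}_\xi\mathring{Ric},\mathring{Ric}\rangle=-4\rho|\mathring{Ric}|^{2}-2(n-2)\langle\nabla^{2}\rho,\mathring{Ric}\rangle .
\]
Integrating $\int_M|\mathring{Ric}|^{2}\langle\xi,\nabla\rho\rangle\,dM=-\int_M\rho\,\operatorname{div}\bigl(|\mathring{Ric}|^{2}\xi\bigr)\,dM$ by parts and substituting this identity gives
\[
\int_M|\mathring{Ric}|^{2}\langle\xi,\nabla\rho\rangle\,dM=(4-n)\int_M\rho^{2}|\mathring{Ric}|^{2}\,dM+2(n-2)\int_M\rho\,\langle\nabla^{2}\rho,\mathring{Ric}\rangle\,dM .
\]
Here dimension four is decisive: the coefficient $4-n$ annihilates the $\rho^{2}|\mathring{Ric}|^{2}$ term. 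Integrating the surviving term by parts once more, using $\operatorname{div}\mathring{Ric}=0$ (valid since $R$ is constant), turns it into $-\int_M\mathring{Ric}(\nabla\rho,\nabla\rho)\,dM$, so in dimension four
\[
\int_M|\mathring{Ric}|^{2}\langle\xi,\nabla\rho\rangle\,dM=-4\int_M\mathring{Ric}(\nabla\rho,\nabla\rho)\,dM .
\]

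Finally, I would identify the right-hand side with a nonnegative quantity. Integrating Bochner's formula for $|\nabla\rho|^{2}$ over $M$, inserting $\Delta\rho=-\tfrac{R}{3}\rho$ (hence $\int_M|\nabla\rho|^{2}\,dM=\tfrac{R}{3}\int_M\rho^{2}\,dM$), and simplifying yields
\[
\int_M\Bigl|\nabla^{2}\rho+\tfrac{R}{12}\rho\,g\Bigr|^{2}\,dM=-\int_M\mathring{Ric}(\nabla\rho,\nabla\rho)\,dM ,
\]
so that $\int_M|\mathring{Ric}|^{2}\langle\xi,\nabla\rho\rangle\,dM=4\int_M\bigl|\nabla^{2}\rho+\tfrac{R}{12}\rho\,g\bigr|^{2}\,dM\ge0$. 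Under the hypothesis $\int_M|\mathring{Ric}|^{2}\langle\xi,\nabla\rho\rangle\,dM\le0$ this forces $\nabla^{2}\rho=-\tfrac{R}{12}\rho\,g$ with $R>0$ and $\rho$ non-constant, whence Obata's theorem gives that $(M^{4},g)$ is isometric to $\mathbb{S}^{4}(R/12)$. Aside from the opening step the argument is pure integration by parts, the crux being that the factor $4-n$ kills the obstructing term exactly in dimension four; note that the displayed identity in fact shows the integral condition to be equivalent to the conclusion.
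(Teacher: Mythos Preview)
Your proof is correct and follows essentially the same route as the paper's: both derive $\mathcal{L}_\xi\mathring{Ric}=-(n-2)\mathring{\nabla^2}\rho$, use it to compute ${\rm div}(|\mathring{Ric}|^2\xi)$ and obtain (in dimension~$4$ with constant $R$) the identity $\int_M|\mathring{Ric}|^2\langle\xi,\nabla\rho\rangle\,dM=-4\int_M\mathring{Ric}(\nabla\rho,\nabla\rho)\,dM$, then combine this with the integrated Bochner formula $\int_M\bigl(\mathring{Ric}(\nabla\rho,\nabla\rho)+|\mathring{\nabla^2}\rho|^2\bigr)\,dM=0$ to conclude via Obata. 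The only cosmetic difference is that the paper packages the divergence identities as separate lemmas (its Lemma~\ref{lematensor}(iii) and Corollary~\ref{Mcorollary}), whereas you integrate by parts inline; the substance is the same.
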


\begin{theorem}\label{thm3}
	Let $(M^4,g)$ be a compact Riemannian manifold  with smooth totally geodesic boundary $\partial M$  and constant scalar curvature. Let $\xi$ be a non trivial conformal vector field on  $M^4$ with conformal factor $\rho$, which satisfies $\rho = 0$ on $\partial M$. If $\displaystyle\int_M|\mathring{Ric}|^2\langle \xi,\nabla \rho\rangle \,dM\leq0$, then $M^4$ is isometric to a hemisphere $\mathbb{S}^4_+(c)$ for some positive constant $c.$
\end{theorem}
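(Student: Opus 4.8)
The plan is to show that the hypotheses force the conformal factor $\rho$ to satisfy an Obata overdetermined equation $\nabla^2\rho=-c\rho\,g$ with $c>0$, $\rho$ non-constant and $\rho|_{\partial M}=0$, and then to conclude, exactly as in the proof of Theorem~\ref{thm1}, that $M^4$ is a hemisphere. Throughout I use $\nabla_X\xi=\rho X$, the defining property of the conformal factor recalled in the Introduction; standard consequences are $\operatorname{div}\xi=n\rho$ and $Ric(\xi,\cdot)=-(n-1)\,d\rho$. Taking the divergence of the latter and using $\operatorname{div}Ric=\tfrac12\,dR=0$ (constant scalar curvature) gives $\Delta\rho=-\tfrac{R}{n-1}\rho$. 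Since $\partial M\neq\emptyset$ and $\rho|_{\partial M}=0$, were $\rho$ constant it would vanish identically, contradicting that $\xi$ is non-trivial; hence $\rho$ is non-constant, and then $0<\int_M|\nabla\rho|^2\,dM=-\int_M\rho\,\Delta\rho\,dM=\tfrac{R}{n-1}\int_M\rho^2\,dM$ forces $R>0$.

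Next I would record the pointwise identity
\[
\nabla_\xi\mathring{Ric}+2\rho\,\mathring{Ric}=-(n-2)\Big(\nabla^2\rho-\tfrac1n(\Delta\rho)g\Big),
\]
valid for a closed conformal field on a manifold of constant scalar curvature. One obtains it by equating two expressions for $\mathcal{L}_\xi Ric$: directly, $\mathcal{L}_\xi Ric=\nabla_\xi Ric+2\rho\,Ric$ (using only $\nabla_X\xi=\rho X$); on the other hand, since $\mathcal{L}_\xi g=2\rho g$, the tensor $\mathcal{L}_\xi Ric$ is the linearization of the Ricci tensor in the conformal direction $2\rho g$, namely $-(n-2)\nabla^2\rho-(\Delta\rho)g$. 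Subtracting the pure--trace parts of the two expressions gives the displayed identity (and re-derives $\Delta\rho=-\tfrac{R}{n-1}\rho$).

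Now I specialize to $n=4$ and assemble two integral identities. First, integrating $\operatorname{div}\!\big(\rho\,|\mathring{Ric}|^2\,\xi\big)$ over $M$ (the boundary term $\int_{\partial M}\rho\,|\mathring{Ric}|^2\langle\xi,\nu\rangle\,dA$ vanishes since $\rho|_{\partial M}=0$), expanding the divergence and inserting $\langle\nabla_\xi\mathring{Ric},\mathring{Ric}\rangle$ from the identity above, the $\int_M\rho^2|\mathring{Ric}|^2$ contributions cancel (this is where $n=4$ enters), and one further integration by parts --- whose boundary terms vanish because $\rho|_{\partial M}=0$ and whose $\nabla Ric$ terms vanish because $\operatorname{div}Ric=0$ --- leaves
\[
\int_M|\mathring{Ric}|^2\langle\xi,\nabla\rho\rangle\,dM=-4\int_M\mathring{Ric}(\nabla\rho,\nabla\rho)\,dM .
\]
Second, applying Reilly's formula to $\rho$: since $\rho|_{\partial M}=0$ and $\partial M$ is totally geodesic, the whole boundary contribution drops out (the mean-curvature term because $H=0$, the remaining terms because the intrinsic gradient of $\rho|_{\partial M}$ vanishes), so $\int_M|\nabla^2\rho|^2\,dM=\int_M(\Delta\rho)^2\,dM-\int_M Ric(\nabla\rho,\nabla\rho)\,dM$ exactly as on a closed manifold; extracting the trace-free part of $\nabla^2\rho$ and substituting $\Delta\rho=-\tfrac{R}{n-1}\rho$ and $\int_M|\nabla\rho|^2\,dM=\tfrac{R}{n-1}\int_M\rho^2\,dM$ yields
\[
\int_M\Big|\nabla^2\rho-\tfrac14(\Delta\rho)g\Big|^2\,dM=-\int_M\mathring{Ric}(\nabla\rho,\nabla\rho)\,dM .
\]
Combining the two displays, $\int_M|\mathring{Ric}|^2\langle\xi,\nabla\rho\rangle\,dM=4\int_M\big|\nabla^2\rho-\tfrac14(\Delta\rho)g\big|^2\,dM\ge0$, so the hypothesis $\int_M|\mathring{Ric}|^2\langle\xi,\nabla\rho\rangle\,dM\le0$ forces $\nabla^2\rho=\tfrac14(\Delta\rho)g=-\tfrac{R}{12}\rho\,g$, an Obata equation with constant $\tfrac{R}{12}>0$.

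To finish, $\rho$ is a non-constant function on $M^4$ satisfying $\nabla^2\rho=-c\rho\,g$ with $c=\tfrac{R}{12}>0$ and $\rho|_{\partial M}=0$ on the totally geodesic boundary, so $M^4$ is isometric to a hemisphere $\mathbb{S}^4_+(c)$ for some positive constant $c$ --- the same concluding step as in Theorem~\ref{thm1} (for instance, doubling $M$ across $\partial M$ produces a smooth closed manifold, because $\partial M$ is totally geodesic, on which the odd extension of $\rho$ is a non-constant solution of the same Obata equation; Obata's theorem then identifies that manifold with a round sphere, whence $M$ is a hemisphere). The step I expect to demand the most care is the bookkeeping of boundary integrals: it is precisely the hypotheses $\rho|_{\partial M}=0$ and $\partial M$ totally geodesic that make both integral identities hold without boundary corrections, and one must simultaneously keep track of the pointwise identity above and of $\operatorname{div}Ric=0$ so that no residual curvature-derivative terms survive.
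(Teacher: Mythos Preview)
Your overall architecture is the paper's: the two integral identities you aim for are precisely Corollary~\ref{Mcorollary} and Proposition~\ref{propbochner}, and the combination plus Reilly's Theorem~B is exactly how the paper concludes.

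There is one genuine gap. You write ``Throughout I use $\nabla_X\xi=\rho X$,'' but that relation characterizes \emph{closed} conformal fields; Theorem~\ref{thm3} is stated for an arbitrary conformal field, which only satisfies $\nabla_X\xi=\rho X+\varphi(X)$ with $\varphi$ skew (see~\eqref{eqconf2}). Consequently your pointwise identity $\nabla_\xi\mathring{Ric}+2\rho\,\mathring{Ric}=-(n-2)\mathring{\nabla^2}\rho$ and the formula $Ric(\xi,\cdot)=-(n-1)d\rho$ both acquire extra $\varphi$--terms in general, so as written you have proved only the closed case. The repair is immediate: work with the Lie derivative instead. Yano's identity $\mathcal{L}_\xi\mathring{Ric}=-(n-2)\mathring{\nabla^2}\rho$ (Lemma~\ref{yanod}(i)) holds for every conformal field, and since $|\mathring{Ric}|^2$ is a scalar one has
\[
\xi\big(|\mathring{Ric}|^2\big)=\mathcal{L}_\xi|\mathring{Ric}|^2=-4\rho\,|\mathring{Ric}|^2+2\langle\mathcal{L}_\xi\mathring{Ric},\mathring{Ric}\rangle,
\]
which is exactly what your divergence computation needs (equivalently, the $\varphi$--contribution to $\langle\nabla_\xi\mathring{Ric},\mathring{Ric}\rangle$ is $\mathrm{tr}(\varphi\,\mathring{Ric}^{\,2})=0$ by skew--symmetry). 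With this correction your first display coincides with Lemma~\ref{lematensor}(ii)--(iii) specialized to $n=4$, i.e.\ Corollary~\ref{Mcorollary}. Likewise, $\Delta\rho=-\tfrac{R}{n-1}\rho$ holds for general conformal $\xi$ on a constant--scalar--curvature manifold (the paper quotes this from \cite{Hwang-yun}), so you may use it without the closed--field derivation.

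Your Reilly/Bochner step for the second display is correct and in fact tidier than the paper's Proposition~\ref{propbochner}: with $\rho|_{\partial M}=0$ and $\partial M$ totally geodesic one has $\rho_{\nu\nu}=\Delta\rho-\Delta^{\partial M}\rho=0$ on $\partial M$, so the boundary term $\int_{\partial M}\rho_\nu\rho_{\nu\nu}\,d\sigma$ vanishes directly, whereas the paper obtains this by comparing two derivations with different coefficients.
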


In order to state our last result, we define  the static triple as follows:

\begin{definition}\label{DefStatic}
	A complete and connected Riemannian manifold $(M^{n}, g)$ with a (possibly non empty) boundary $\partial M$ is said to be static if there exists a non-negative function $f$ on $M$ satisfying
	\begin{equation}\label{static}
		(\Delta f)g+f Ric =\nabla^2 f,
	\end{equation}in $M \setminus\partial M,$ and $\partial M= f^{-1}(0)$, where $Ric$, $\Delta$ and $\nabla^2$ stand, respectively, for the Ricci tensor, the Laplacian operator and the Hessian form on $M^n$. 
	
	In this case, $(M^{n},\,g,\,f)$ is called a static triple or simply a static metric.
\end{definition}

We highlight that, in General Relativity, Equation \eqref{static} appears as static solutions of Einstein field equations. Furthermore, Corvino et al. [~\cite{coreich}, Proposition 2.1] showed that a static metric also has constant scalar curvature $R$. Boucher et al.~\cite{Bou} formulated a classic conjecture, called \textit{cosmic no-hair conjecture}, as follows:

\vspace{0,2cm}
\textit{The only $n$-dimensional compact static triple $(M^n,\,g,\,f)$ with positive scalar curvature and connected boundary $\Sigma$ is given by a round hemisphere $\mathbb{S}^{n}_{+}$, where the function $f$ is taken as the height function.}
\vspace{0,2cm}

Now  we give a partial answer for the cosmic no-hair conjecture. We point out that this result is similar to that one derived in \cite{Hwang-yun} for critical metrics.

\begin{theorem}\label{thm4}
Let $(M^n, g, \rho)$ be a simply connected, compact static metric with smooth boundary $\partial M$ totally geodesic and endowed with a smooth conformal vector field $\xi$ which conformal factor is $\rho.$   If $\psi:=n\rho^2-\langle \nabla \rho, \xi\rangle\neq 0$, then $(M^n,g)$  is isometric to  a hemisphere $\mathbb{S}^n_+(c)$ for some positive constant $c.$
\end{theorem}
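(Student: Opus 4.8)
The plan is to combine the static equation \eqref{static} with the closed conformal equation $\nabla_X\xi=\rho X$ in order to force $(M^n,g)$ to be Einstein, and then to quote Theorem~\ref{thm1} to conclude that $M$ is a hemisphere. I will use throughout that a static metric has constant scalar curvature $R$, so that tracing \eqref{static} gives $\Delta\rho=-\frac{R}{n-1}\rho$, and that $\nabla_X\xi=\rho X$ implies $\operatorname{div}\xi=n\rho$. As a first step I would dispose of degeneracies: $\rho\not\equiv0$ (otherwise $\psi\equiv0$), and if $R\le0$ then $\Delta\rho=-\frac{R}{n-1}\rho\ge0$ because $\rho\ge0$, so $\rho$ is subharmonic, and the maximum principle together with $\rho|_{\partial M}=0$ forces $\rho\equiv0$, a contradiction; hence $R>0$.

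The heart of the argument is a pointwise identity for the variation of $\mathring{Ric}$ along $\xi$. Using the standard formula $\mathcal{L}_\xi Ric=-(n-2)\nabla^2\rho-(\Delta\rho)g$ valid for a closed conformal field, rewriting $\mathcal{L}_\xi Ric=\nabla_\xi Ric+2\rho\,Ric$ (here one uses $\nabla_X\xi=\rho X$), and inserting the static equation \eqref{static} and $\Delta\rho=-\frac{R}{n-1}\rho$, a short computation gives
\[
\nabla_\xi Ric=-n\rho\,\mathring{Ric},\qquad\text{hence}\qquad \nabla_\xi\mathring{Ric}=-n\rho\,\mathring{Ric}
\]
(the last equality because $R$ is constant). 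Taking the inner product with $\mathring{Ric}$ yields $\xi\bigl(|\mathring{Ric}|^2\bigr)=-2n\rho\,|\mathring{Ric}|^2$, and together with $\operatorname{div}\xi=n\rho$ this produces the divergence identity
\[
\operatorname{div}\bigl(|\mathring{Ric}|^2\,\xi\bigr)=-n\rho\,|\mathring{Ric}|^2 .
\]

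I would then multiply by $\rho$ and integrate over $M$. Since $\rho=0$ on $\partial M$ the boundary term disappears, so integration by parts gives $\displaystyle\int_M|\mathring{Ric}|^2\langle\nabla\rho,\xi\rangle\,dM=n\int_M\rho^2|\mathring{Ric}|^2\,dM$, and consequently
\[
\int_M\psi\,|\mathring{Ric}|^2\,dM=\int_M\bigl(n\rho^2-\langle\nabla\rho,\xi\rangle\bigr)|\mathring{Ric}|^2\,dM=0 .
\]
Since $\psi$ is continuous and nowhere zero on the connected manifold $M$, it has constant sign, so $\psi|\mathring{Ric}|^2$ has constant sign and, having zero integral, vanishes identically; as $\psi\neq0$ this gives $\mathring{Ric}\equiv0$, i.e.\ $(M^n,g)$ is Einstein. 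Now $M$ has constant scalar curvature, totally geodesic boundary, $\rho|_{\partial M}=0$ and (trivially) $\mathring{Ric}(\nabla\rho)=0$, so Theorem~\ref{thm1} applies and $M$ is isometric to a hemisphere $\mathbb{S}^n_+(c)$ for some $c>0$. (Alternatively, $\mathring{Ric}=0$ turns \eqref{static} into the Obata equation $\nabla^2\rho=-\frac{R}{n(n-1)}\rho\,g$ with $R>0$ and $\rho$ a nonconstant function vanishing on $\partial M$, and one concludes by the hemisphere rigidity for simply connected manifolds with totally geodesic boundary.)

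I expect the main obstacle to be the derivation of $\nabla_\xi\mathring{Ric}=-n\rho\,\mathring{Ric}$: it relies on the correct Lie-derivative formula for $Ric$ under a closed conformal field and on the precise cancellations produced by the static equation together with the constancy of $R$. The remaining steps — the maximum principle yielding $R>0$, the single integration by parts (whose only boundary term is annihilated by $\rho|_{\partial M}=0$), the sign argument using $\psi\neq0$, and the appeal to Theorem~\ref{thm1} — are comparatively routine.
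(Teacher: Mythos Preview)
Your overall strategy coincides with the paper's: establish the integral identity $\int_M \psi\,|\mathring{Ric}|^2\,dM=0$, use the sign of $\psi$ to force $\mathring{Ric}\equiv 0$, and then invoke hemisphere rigidity. The paper obtains the identity from item~(ii) of Lemma~\ref{lematensor} together with~\eqref{5}, reaching ${\rm div}(\rho|\mathring{Ric}|^2\xi)=|\mathring{Ric}|^2\langle\nabla\rho,\xi\rangle-n\rho^2|\mathring{Ric}|^2$ and integrating once; you instead derive ${\rm div}(|\mathring{Ric}|^2\xi)=-n\rho|\mathring{Ric}|^2$ and multiply by $\rho$ before integrating. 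These are equivalent computations, and your final appeal to Theorem~\ref{thm1} (in place of the paper's direct use of Reilly) is a legitimate shortcut.

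There is, however, a genuine gap. The hypothesis only says $\xi$ is \emph{conformal}, not closed, yet your argument repeatedly uses $\nabla_X\xi=\rho X$. In particular the step $\mathcal{L}_\xi Ric=\nabla_\xi Ric+2\rho\,Ric$ fails in general: for an arbitrary conformal field one has $\nabla_X\xi=\rho X+\varphi X$ with $\varphi$ skew, and the correct relation is
\[
\nabla_\xi\mathring{Ric}=-n\rho\,\mathring{Ric}-\bigl[\mathring{Ric}(\varphi\,\cdot,\cdot)+\mathring{Ric}(\cdot,\varphi\,\cdot)\bigr],
\]
so your pointwise identity $\nabla_\xi\mathring{Ric}=-n\rho\,\mathring{Ric}$ is false without closedness. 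Fortunately the extra term, being the symmetrization of a skew endomorphism against the symmetric tensor $\mathring{Ric}$, has zero inner product with $\mathring{Ric}$; hence $\xi(|\mathring{Ric}|^2)=-2n\rho|\mathring{Ric}|^2$ and your divergence identity survive. You should either insert this observation or, more simply, invoke item~(i) of Lemma~\ref{lematensor} (valid for arbitrary conformal $\xi$) and combine it with~\eqref{5} to obtain ${\rm div}(|\mathring{Ric}|^2\xi)=-2(n-2)\rho|\mathring{Ric}|^2+(n-4)\rho|\mathring{Ric}|^2=-n\rho|\mathring{Ric}|^2$ directly. With that correction the rest of your proof is fine, and your sign argument for $\psi$ is in fact more carefully stated than the paper's.
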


This paper is organized as follows. In Section \ref{PKL} we review the necessary knowledge about conformal vector fields in Riemannian geometry, as well as properties of static metrics. In Section \ref{prooftheorems} we give the proofs of Theorems \ref{thm1}, \ref{thm2}, \ref{thm3} and \ref{thm4}.

\section{Preliminaries and Key Lemmas}\label{PKL}
In this section, we will present some basic results of static metrics, as well as we will present the basic concepts about conformal vector fields and basic results that will be used in the proofs of the main theorems of this work.

 Note that tracing \eqref{static} we arrive at
\begin{equation}\label{4}
\triangle f=-\frac{Rf}{n-1},
 \end{equation} 
 and, by using again \eqref{static}, it is not difficult to check that
\begin{equation}\label{5}
 f\mathring {Ric}=\mathring{\nabla}^2 f,
 \end{equation}
where $\mathring {T}$ stands for the traceless of the tensor $T.$

We recall that a smooth vector field $\xi\in  \mathfrak{X}(M)$ is said to be \textit{conformal} if
\begin{equation}\label{eqconformalfield}
	\mathcal{L}_{\xi}g=2\rho g
\end{equation}
for a smooth function $\rho$ on $M$, where $\mathcal{L}_{\xi}$ is the Lie derivative in the direction of $\xi$. The function $\rho$ is the conformal factor of $\xi$ (cf.~\cite{Besse}). If $\xi$ is the gradient of a smooth function  on $M$, then $\xi$ is said to be a \textit{conformal gradient vector field}. In this case,  $\xi$ is  also closed. We say that $\xi$ is a non trivial conformal vector field if it is a non-Killing conformal vector field. 

Remember from Equation \eqref{eqconformalfield} that a vector field $\xi$ on Riemannian manifold $(M^n,\,g)$ is called conformal if $\mathcal{L}_{\xi}g$ is a multiple of $g.$ As a straightforward consequence of Koszul's formula, we have the following identity for  $\xi$ on $M$,
\begin{equation*}
	2g(\nabla_X \xi,Y)=\mathcal{L}_{\xi}g(X,Y)+d\eta (X,Y), \,\,\,X,Y\in\mathfrak{X}(M),
\end{equation*}
where $\eta$ stands for the dual 1-form associated to $Z$, that is, $\eta(Y)=g( Z,Y )$. We note that we can define $\varphi$
the following skew symmetric (1,1)-tensor:
\begin{equation*}
	d\eta(X,Y)=2g(\varphi(X),Y),\,\,\,X,Y\in\mathfrak{X}(M).
\end{equation*}
Therefore, by using the above equations, we arrive at
\begin{equation}\label{eqconf2}
	\nabla_X\xi=\rho X+\varphi(X), \,\,\, X\in \mathfrak{X}(M).
\end{equation}
Note that $\varphi$ gives us an idea of how much of the field $\xi$ is not closed vector field. There are several papers involving closed conformal vector fields  (see, e.g.,~\cite{caminha, hicks, tanno}). 

Observe that we can identify $\varphi$ with a  skew symmetric $(0,2)$-tensor and $\xi$ with the tensor $\xi(Y)=g(\xi,Y), \,\,Y\in \mathfrak{X}(M)$, to rewrite \eqref{eqconf2} as follows
\begin{equation}\label{eqderxitens}
	\nabla \xi=\rho g+\varphi,
\end{equation}
 where $\varphi(X,Y):=g(\varphi(X),Y)$. Moreover, we adopt the following expression for the curvature tensor
\begin{equation*}
R(X,Y)Z=\nabla_X \nabla_Y Z-\nabla_Y \nabla_X Z-\nabla_{[X,Y]}Z.
\end{equation*} 
Furthermore, on a Riemannian manifold $(M^n,\,g)$, the Ricci operator $S$ is defined using the Ricci tensor $Ric$ (see~\cite{Besse}) by
\begin{equation}
Ric(X,Y)=g(SX,Y), \,\,\,X,Y\in\mathfrak{X}(M).
\end{equation}
Whence, one can use  Equation \eqref{eqconf2} to get
\begin{equation}
R(X,Y)\xi=X(\rho)Y-Y(\rho)X+(\nabla \varphi)(X,Y)-(\nabla \varphi)(Y,X),
\end{equation}
where $(\nabla \varphi)(X,Y)=\nabla_{X}\big( \varphi Y\big) -\varphi(\nabla_X Y).$

Using the above equation and the expression for the Ricci tensor
$$Ric(X,Y)=\sum_{i=1}^n g(R(e_i,X)Y,e_i),$$
where $\{e_1, \ldots, e_n\}$ is a local orthonormal frame, we obtain
\begin{equation}\label{ric1}
Ric(\xi,Y)=-(n-1)Y(\rho)-\sum_{i=1}^n g(Y,(\nabla \varphi)(e_i,e_i)),
\end{equation}
where we used the skew symmetry of the operator $\varphi$. The above relation gives
\begin{equation}\label{ric2}
S(\xi)=-(n-1)\nabla \rho-\sum_{i=1}^n (\nabla \varphi)(e_i,e_i).
\end{equation}

Now, using again, Equation \eqref{eqconf2}, we compute the action of the rough Laplace operator $\Delta$ on
the vector field $\xi$ and find
\begin{equation}\label{rough laplace}
\Delta\xi=\nabla \rho+\sum_{i=1}^n(\nabla \varphi)(e_i,e_i).
\end{equation}

The following lemma, obtained previously in [~\cite{evviana}, Lemma 1], will be useful.

\begin{lemma}\label{lemmadiv}Let $(M^n,\,g)$ be a smooth compact Riemannian manifold with smooth totally geodesic boundary $\partial M$ and $\xi$ a smooth conformal vector field on $M$ with conformal factor $\rho$ satisfying $\rho|_{\partial M}=0$. Denote by ${\rm div}$ and ${\rm div}_{\partial M}$ the divergence operators on $M$ and $\partial M$, respectively, and
	by $\xi^T$ the tangential part of $\xi$ on $\partial M$. Then,
	\begin{equation}\label{eqdiv}
	{\rm div}(\xi)=n\rho,\,\,\,\,\,\,\,\ {\rm div}_{\partial M}(\xi^T)=(n-1)\rho.
	\end{equation}
	Furthermore,
	\begin{equation}\label{eqint}
	\int_M g(\nabla \rho,\xi)\,dM=-n\int_M \rho^2\,dM.
	\end{equation}
\end{lemma}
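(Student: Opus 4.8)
The plan is to establish the three identities in turn, each following from the pointwise structure equation \eqref{eqconf2} together with the skew-symmetry of $\varphi$, and, for the last two, from the geometry of the totally geodesic boundary. For the first identity I would simply trace \eqref{eqconf2}: fixing a local orthonormal frame $\{e_1,\dots,e_n\}$, one has ${\rm div}(\xi)=\sum_{i=1}^n g(\nabla_{e_i}\xi,e_i)=\sum_{i=1}^n\big[\rho\,g(e_i,e_i)+g(\varphi(e_i),e_i)\big]$, and since $\varphi$ is skew-symmetric every term $g(\varphi(e_i),e_i)$ vanishes, leaving $n\rho$. This part is immediate and uses no boundary hypothesis.

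For the boundary identity I would choose, along $\partial M$, an adapted orthonormal frame $\{e_1,\dots,e_{n-1},N\}$ with $N$ the outward unit normal and $\{e_i\}$ tangent to $\partial M$, and decompose $\xi=\xi^T+\langle\xi,N\rangle N$. The crux is to relate $(\nabla_{e_i}\xi)^T$ to $\nabla^{\partial M}_{e_i}\xi^T$. Expanding
\[
\nabla_{e_i}\xi=\nabla_{e_i}\xi^T+e_i(\langle\xi,N\rangle)\,N+\langle\xi,N\rangle\,\nabla_{e_i}N,
\]
and using the Gauss formula $\nabla_{e_i}\xi^T=\nabla^{\partial M}_{e_i}\xi^T+\mathrm{II}(e_i,\xi^T)N$ together with the Weingarten relation $\nabla_{e_i}N=-A(e_i)$, the totally geodesic hypothesis forces both the scalar second fundamental form $\mathrm{II}$ and the shape operator $A$ to vanish, so that $(\nabla_{e_i}\xi)^T=\nabla^{\partial M}_{e_i}\xi^T$. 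Taking $g(\cdot,e_i)$ and summing over $i=1,\dots,n-1$, then invoking \eqref{eqconf2} and the skew-symmetry of $\varphi$ once more, yields ${\rm div}_{\partial M}(\xi^T)=\sum_{i=1}^{n-1}\big[\rho+g(\varphi(e_i),e_i)\big]=(n-1)\rho$. I expect this to be the main obstacle, since it requires careful bookkeeping of the tangential and normal components and the correct use of the totally geodesic condition to discard the boundary curvature terms; note this identity is purely pointwise and does not itself need $\rho|_{\partial M}=0$.

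Finally, for the integral identity I would apply the divergence theorem to the vector field $\rho\xi$. Using the product rule together with the first identity gives ${\rm div}(\rho\xi)=\langle\nabla\rho,\xi\rangle+\rho\,{\rm div}(\xi)=\langle\nabla\rho,\xi\rangle+n\rho^2$. Integrating over $M$ and applying Stokes' theorem produces $\int_M{\rm div}(\rho\xi)\,dM=\int_{\partial M}\rho\,\langle\xi,N\rangle\,d\sigma$, and since $\rho|_{\partial M}=0$ the boundary term vanishes. Rearranging then gives $\int_M\langle\nabla\rho,\xi\rangle\,dM=-n\int_M\rho^2\,dM$, which is \eqref{eqint}; thus the hypothesis $\rho|_{\partial M}=0$ enters only at this last step, precisely to annihilate the boundary contribution.
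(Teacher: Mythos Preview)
Your argument is correct in all three parts. Note, however, that the paper does not actually supply a proof of this lemma: it simply quotes the result from \cite{evviana} (their Lemma~1) and moves on. Your derivation---tracing \eqref{eqconf2} for the first identity, using the Gauss--Weingarten formulas with the totally geodesic hypothesis for the second, and applying the divergence theorem to $\rho\xi$ for the third---is the standard route and is exactly what one would expect the cited reference to contain. Your observation that the pointwise identity ${\rm div}_{\partial M}(\xi^T)=(n-1)\rho$ does not itself require $\rho|_{\partial M}=0$ is also accurate; that hypothesis is needed only to kill the boundary term in \eqref{eqint}.
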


We present below properties that are valid, in general,  for conformal vector fields.

\begin{lemma}\label{yanod}
	Let $\xi$ be a non trivial conformal vector field on a Riemannian
	manifold $(M^n, g)$ with conformal factor $\rho$. Then
		\begin{enumerate}
			\item [(i)] \label{lemayano} $\mathcal{L}_{\xi}  \mathring{Ric}+(n-2)\mathring{\nabla^2} \rho=0.$
			\item [(ii)] ${\rm div}(\mathcal{L}_{\xi} \mathring{Ric})=-(n-2)\mathring{Ric}(\nabla \rho)-\dfrac{(n-1)(n-2)}{n}\nabla \Delta \rho-\dfrac{(n-2)R}{n}\nabla \rho.$ When the sacalar curvature $R$ is constant, we have
			\begin{equation}\label{mainequation}
				{\rm div}(\mathcal{L}_{\xi} \mathring{Ric})=-(n-2)\mathring{Ric}(\nabla \rho).
			\end{equation}
			\item [(iii)] \begin{eqnarray*}
				{\rm div}(\rho^m\mathcal{L}_\xi \mathring{ Ric}(\nabla \rho))
				&=&-\rho^{m-1}\big[m(n-2) \mathring{\nabla^2 \rho}(\nabla \rho,\nabla \rho)+\rho\,g({\rm div}(\mathcal{L}_\xi \mathring{ Ric}),\nabla \rho)\\
				&&+(n-2)\rho |\mathring{\nabla^2}  \rho|^2\big].
			\end{eqnarray*} Moreover, $$\int_M\rho^{m-1}\Big(m(n-2) \mathring{\nabla^2 \rho}(\nabla \rho,\nabla \rho)+\rho\,g({\rm div}(\mathcal{L}_\xi \mathring{ Ric}),\nabla \rho)+(n-2)\rho |\mathring{\nabla^2}  \rho|^2\Big) dM=0.$$
		\end{enumerate}
	\end{lemma}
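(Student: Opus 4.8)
The plan is to establish the three identities successively, with (i) being the analytic heart and (ii)--(iii) following by differentiation, the Ricci identity, and integration by parts.

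For (i), I would begin from the defining relation $\mathcal{L}_{\xi}g=2\rho g$ and first extract the Lie derivative of the Levi--Civita connection. Since $\mathcal{L}_{\xi}\nabla$ is the symmetric $(1,2)$-tensor determined by $h:=\mathcal{L}_{\xi}g$ through the Koszul-type identity $2\langle(\mathcal{L}_{\xi}\nabla)(X,Y),Z\rangle=(\nabla_X h)(Y,Z)+(\nabla_Y h)(X,Z)-(\nabla_Z h)(X,Y)$, substituting $h=2\rho g$ (and using $\nabla g=0$) gives $(\mathcal{L}_{\xi}\nabla)(X,Y)=(X\rho)Y+(Y\rho)X-\langle X,Y\rangle\nabla\rho$. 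I would then invoke the standard commutation formula $(\mathcal{L}_{\xi}R)(X,Y)Z=(\nabla_X\mathcal{L}_{\xi}\nabla)(Y,Z)-(\nabla_Y\mathcal{L}_{\xi}\nabla)(X,Z)$ and contract over the first slot, following the Ricci convention $Ric(X,Y)=\sum_i\langle R(e_i,X)Y,e_i\rangle$ adopted in the excerpt, to obtain the Yano identity $\mathcal{L}_{\xi}Ric=-(n-2)\nabla^2\rho-(\Delta\rho)g$. Finally, since $\mathcal{L}_{\xi}g^{ij}=-2\rho g^{ij}$ yields $\mathrm{tr}_g(\mathcal{L}_{\xi}Ric)=\xi(R)+2\rho R$, a short check shows $\mathcal{L}_{\xi}\mathring{Ric}=\mathcal{L}_{\xi}Ric-\tfrac1n\mathrm{tr}_g(\mathcal{L}_{\xi}Ric)\,g$, i.e.\ $\mathcal{L}_{\xi}\mathring{Ric}$ is exactly the traceless part of $\mathcal{L}_{\xi}Ric$; taking traceless parts of the Yano identity annihilates the pure-trace term $(\Delta\rho)g$ and leaves $\mathcal{L}_{\xi}\mathring{Ric}=-(n-2)\mathring{\nabla^2}\rho$, which is (i).

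For (ii), I would take the divergence of (i), reducing everything to ${\rm div}(\mathring{\nabla^2}\rho)$. Writing $\mathring{\nabla^2}\rho=\nabla^2\rho-\tfrac{\Delta\rho}{n}g$, using ${\rm div}((\Delta\rho)g)=\nabla\Delta\rho$, and computing ${\rm div}(\nabla^2\rho)=\nabla\Delta\rho+Ric(\nabla\rho)$ by commuting the two covariant derivatives in $\sum_i(\nabla_{e_i}\nabla^2\rho)(e_i,\cdot)$ via the Ricci identity and contracting the resulting curvature term, I obtain ${\rm div}(\mathring{\nabla^2}\rho)=\tfrac{n-1}{n}\nabla\Delta\rho+\mathring{Ric}(\nabla\rho)+\tfrac{R}{n}\nabla\rho$ after splitting $Ric=\mathring{Ric}+\tfrac{R}{n}g$. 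Multiplying by $-(n-2)$ gives the general formula. In the constant-$R$ case, tracing the Yano identity gives $\xi(R)+2\rho R=-2(n-1)\Delta\rho$, so $R$ constant forces $\Delta\rho=-\tfrac{R\rho}{n-1}$ and hence $\nabla\Delta\rho=-\tfrac{R}{n-1}\nabla\rho$; substituting, the $\nabla\Delta\rho$ and $\tfrac{R}{n}\nabla\rho$ contributions cancel and leave \eqref{mainequation}.

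For (iii), I would apply two product rules to $V:=\mathcal{L}_{\xi}\mathring{Ric}(\nabla\rho)$. The scalar rule gives ${\rm div}(\rho^m V)=m\rho^{m-1}\langle\nabla\rho,V\rangle+\rho^m{\rm div}(V)$, and for $T:=\mathcal{L}_{\xi}\mathring{Ric}$ the tensor rule gives ${\rm div}(T(\nabla\rho))=g({\rm div}\,T,\nabla\rho)+\langle T,\nabla^2\rho\rangle$. Substituting $T=-(n-2)\mathring{\nabla^2}\rho$ from (i), and using that $\mathring{\nabla^2}\rho$ is traceless so $\langle\mathring{\nabla^2}\rho,\nabla^2\rho\rangle=|\mathring{\nabla^2}\rho|^2$ while $\langle\nabla\rho,T(\nabla\rho)\rangle=-(n-2)\mathring{\nabla^2}\rho(\nabla\rho,\nabla\rho)$, collects into the stated divergence identity. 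Integrating over $M$ and invoking the divergence theorem, the boundary term $\int_{\partial M}\rho^m\langle V,\nu\rangle\,dA$ vanishes since $\rho|_{\partial M}=0$ (and trivially when $\partial M=\varnothing$), which yields the integral identity. The main obstacle is the computation in (i): correctly producing $\mathcal{L}_{\xi}\nabla$ and then contracting $\mathcal{L}_{\xi}R$ into $\mathcal{L}_{\xi}Ric$ while tracking every sign against the paper's curvature and Ricci conventions. Once the Yano identity is secured, (ii) and (iii) are bookkeeping built on the Ricci identity, the splitting $Ric=\mathring{Ric}+\tfrac{R}{n}g$, the trace relation forced by constant scalar curvature, and the divergence theorem.
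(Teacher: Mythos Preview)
Your proposal is correct and follows essentially the same route as the paper: the paper obtains (i) by quoting Hwang--Yun (which is precisely the Yano identity you derive from $\mathcal{L}_\xi\nabla$), obtains (ii) by invoking the Bochner/Ricci commutation exactly as you do together with the eigenfunction relation $\Delta\rho=-\tfrac{R}{n-1}\rho$ (which you derive by tracing, while the paper again cites Hwang--Yun), and obtains (iii) by the same two product-rule expansions you describe. The only difference is that you supply the derivations the paper outsources to references; your handling of the boundary term in the integral identity is in fact more careful than the paper's, which simply integrates without comment.
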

	
	\begin{proof}
		For the first item we will use Lemma 3.2 due to Hwang and Yun \cite{Hwang-yun} (see also, e.g.,~\cite{yano1,yano}), which gives $$\mathcal{L}_\xi \mathring{Ric}=-(n-2)\nabla^{2}\rho+\frac{n-2}{n}\Delta \rho\, g=-(n-2)\mathring{\nabla^{2}\,\rho}.$$
		
		The first part of  item (ii) follows immediately from Bochner formula. Furthermore, if the scalar curvature $R$ is constant, then, by Lemma 2.2  in \cite{Hwang-yun},  $\rho$  satisfies 
		\begin{equation}\label{eqautofuncrho}
			\Delta \rho=-\frac{R}{n-1}\rho,
		\end{equation}  which give us
		\begin{eqnarray*}
			{\rm div}(\mathcal{L}_{\xi} \mathring{Ric})	&=& -(n-2)\mathring{Ric}(\nabla \rho).
		\end{eqnarray*}
		
		For the last item, we observe that 
			\begin{eqnarray*}
			{\rm div}(\rho^m\mathcal{L}_\xi \mathring{ Ric}(\nabla \rho))&=&m\rho^{m-1}\mathcal{L}_\xi \mathring{ Ric}(\nabla \rho,\nabla \rho)+\rho^m{\rm div}(\mathcal{L}_\xi \mathring{ Ric}(\nabla \rho))\\
			&=&m\rho^{m-1}\mathcal{L}_\xi \mathring{ Ric}(\nabla \rho,\nabla \rho)+\rho^m g({\rm div}(\mathcal{L}_\xi \mathring{ Ric}),\nabla \rho)+\rho^m g(\mathcal{L}_\xi \mathring{ Ric},\nabla^2 \rho).
			\end{eqnarray*}
			
Next we use the first item to deduce 
       \begin{eqnarray*}
			{\rm div}(\rho^m\mathcal{L}_\xi \mathring{ Ric}(\nabla \rho))&=&-m(n-2)\rho^{m-1} \mathring{\nabla^2 \rho}(\nabla \rho,\nabla \rho)+\rho^m g({\rm div}(\mathcal{L}_\xi \mathring{ Ric}),\nabla \rho)\\
			&&-(n-2)\rho^m|\mathring{\nabla^2}  \rho|^2\\
			&=&-\rho^{m-1}\big[m(n-2) \mathring{\nabla^2 \rho}(\nabla \rho,\nabla \rho)+\rho\,g({\rm div}(\mathcal{L}_\xi \mathring{ Ric}),\nabla \rho)\\
			&&+(n-2)\rho |\mathring{\nabla^2}  \rho|^2\big].
		\end{eqnarray*}
	
	So, the integral equation follows immediately by integrating equation contained in the third item. 		
	\end{proof}

The following proposition will be used in the proof of the Theorems \ref{thm1} and \ref{thm3}.

\begin{proposition}\label{propbochner} 
	Let $\xi$ be a non trivial conformal vector field with conformal factor $\rho$ on a compact Riemannian manifold $(M^n, g)$ with smooth totally geodesic boundary $\partial M$  of dimension $n \geq 3 $ and $\rho|_{\partial M}=0$. If   the scalar curvature $R$ of $(M, g)$ is constant , then 
	\begin{equation}
		\int_M \Big(\mathring{Ric}(\nabla \rho,\nabla \rho)+|\mathring{\nabla}^2 \rho|^2\Big)dM=0,
	\end{equation}
	or, equivalently, 
	\begin{equation}
		\int_M  \Big(Ric(\nabla \rho,\nabla \rho)+|\nabla^2 \rho|^2-(\Delta \rho)^2\Big)dM=0.
	\end{equation}
\end{proposition}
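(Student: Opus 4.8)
The plan is to obtain the first integral identity by combining items (ii) and (iii) of Lemma \ref{yanod} with Lemma \ref{lemmadiv}, and then to translate it into the second (equivalent) form using the definition of the traceless tensors and the eigenvalue equation \eqref{eqautofuncrho} for $\rho$. First I would apply item (iii) of Lemma \ref{yanod} with $m=1$, which gives
\begin{equation*}
\int_M\Big((n-2)\mathring{\nabla^2\rho}(\nabla\rho,\nabla\rho)+\rho\,g({\rm div}(\mathcal{L}_\xi\mathring{Ric}),\nabla\rho)+(n-2)\rho\,|\mathring{\nabla}^2\rho|^2\Big)\,dM=0.
\end{equation*}
Since $R$ is constant, \eqref{mainequation} says ${\rm div}(\mathcal{L}_\xi\mathring{Ric})=-(n-2)\mathring{Ric}(\nabla\rho)$, so the middle term becomes $-(n-2)\rho\,\mathring{Ric}(\nabla\rho,\nabla\rho)$. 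Dividing by $n-2$ (here $n\geq3$), the identity reads
\begin{equation*}
\int_M\Big(\mathring{\nabla^2\rho}(\nabla\rho,\nabla\rho)-\rho\,\mathring{Ric}(\nabla\rho,\nabla\rho)+\rho\,|\mathring{\nabla}^2\rho|^2\Big)\,dM=0.
\end{equation*}

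The next step is to handle the term $\int_M\mathring{\nabla^2\rho}(\nabla\rho,\nabla\rho)\,dM$. Writing $\mathring{\nabla^2\rho}(\nabla\rho,\nabla\rho)=\nabla^2\rho(\nabla\rho,\nabla\rho)-\frac{\Delta\rho}{n}|\nabla\rho|^2=\frac12 g(\nabla|\nabla\rho|^2,\nabla\rho)-\frac{\Delta\rho}{n}|\nabla\rho|^2$, I would integrate by parts; because $\rho|_{\partial M}=0$ the boundary terms that arise should vanish (this is where the totally geodesic boundary hypothesis, packaged through Lemma \ref{lemmadiv}-type reasoning, enters). Using the eigenfunction equation \eqref{eqautofuncrho}, $\Delta\rho=-\frac{R}{n-1}\rho$, one rewrites $\rho\,\mathring{Ric}(\nabla\rho,\nabla\rho)=\mathring{Ric}(\nabla\rho,\nabla\rho)\cdot\rho$ and similarly $\rho|\mathring\nabla^2\rho|^2$; the goal is to express everything so that the weight $\rho$ disappears after one more integration by parts, again killing boundary terms via $\rho|_{\partial M}=0$. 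Concretely, I expect the identity $\int_M\rho\,\mathring{Ric}(\nabla\rho,\nabla\rho)\,dM$ and $\int_M\rho|\mathring\nabla^2\rho|^2\,dM$ to recombine, after using the Bochner-type computation underlying item (ii), into $-\int_M(\mathring{Ric}(\nabla\rho,\nabla\rho)+|\mathring\nabla^2\rho|^2)\,dM$ up to the integrated-by-parts Hessian term, which itself contributes nothing by \eqref{eqautofuncrho}.

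For the equivalence of the two displayed integral identities, I would simply expand the traceless tensors: $\mathring{Ric}(\nabla\rho,\nabla\rho)=Ric(\nabla\rho,\nabla\rho)-\frac{R}{n}|\nabla\rho|^2$ and $|\mathring\nabla^2\rho|^2=|\nabla^2\rho|^2-\frac{(\Delta\rho)^2}{n}$. Substituting and using $\Delta\rho=-\frac{R}{n-1}\rho$ to convert $\frac{R}{n}|\nabla\rho|^2$ into an expression involving $(\Delta\rho)^2$ after an integration by parts ($\int_M R|\nabla\rho|^2\,dM = -\int_M R\rho\Delta\rho\,dM = \frac{n-1}{? }\int_M(\Delta\rho)^2\,dM$, with the constant working out so that the extra pieces cancel), one passes from the first form to the second. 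The main obstacle I anticipate is bookkeeping the boundary contributions in the integrations by parts: each step produces a boundary integral over $\partial M$ involving $\rho$ or its normal derivative, and I must use $\rho|_{\partial M}=0$ together with the totally geodesic condition (as in Lemma \ref{lemmadiv}) to discard them — getting the constants and the surviving-versus-vanishing boundary terms exactly right is the delicate part, whereas the interior manipulations are routine Bochner-formula algebra.
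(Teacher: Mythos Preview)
Your approach has a genuine gap at the step where you ``expect the weight $\rho$ to disappear after one more integration by parts.'' Starting from item~(iii) of Lemma~\ref{yanod} with $m=1$ is fine and, since $\rho|_{\partial M}=0$, legitimately yields
\[
\int_M\Big(\mathring{\nabla^2\rho}(\nabla\rho,\nabla\rho)-\rho\,\mathring{Ric}(\nabla\rho,\nabla\rho)+\rho\,|\mathring{\nabla}^2\rho|^2\Big)\,dM=0.
\]
But from here there is no integration by parts that strips the factor $\rho$ off of $\rho\,\mathring{Ric}(\nabla\rho,\nabla\rho)$ or $\rho\,|\mathring{\nabla}^2\rho|^2$; these are pointwise quadratic quantities, not divergences, and the Bochner identity behind item~(ii) relates unweighted objects, not $\rho$-weighted ones. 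So the passage from your weighted identity to the unweighted target $\int_M(\mathring{Ric}(\nabla\rho,\nabla\rho)+|\mathring{\nabla}^2\rho|^2)\,dM=0$ is not justified.

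The paper proceeds differently and the difference is exactly the missing idea. Instead of inserting a factor $\rho^m$ to kill boundary terms, it integrates ${\rm div}\big(\mathcal{L}_\xi\mathring{Ric}(\nabla\rho)\big)$ directly (i.e.\ $m=0$) and \emph{computes} the resulting boundary integral. Using $\rho|_{\partial M}=0$ and the totally geodesic condition, one finds
\[
\int_M\Big(\mathring{Ric}(\nabla\rho,\nabla\rho)+|\mathring{\nabla}^2\rho|^2\Big)dM=\frac{n-1}{n}\int_{\partial M}\rho_{\nu\nu}\rho_\nu\,d\sigma.
\]
A second, independent computation --- integrating the Bochner formula for $\rho$ and using $\Delta\rho=-\frac{R}{n-1}\rho$ --- gives the \emph{same} left-hand side equal to $\int_{\partial M}\rho_{\nu\nu}\rho_\nu\,d\sigma$ (no factor $\frac{n-1}{n}$). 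Comparing the two forces $\int_{\partial M}\rho_{\nu\nu}\rho_\nu\,d\sigma=0$, and hence the desired vanishing. The point you are missing is that $\rho|_{\partial M}=0$ by itself does \emph{not} kill the boundary contribution (it involves $\rho_\nu$ and $\rho_{\nu\nu}$, not $\rho$); one needs this two-routes-to-the-same-integral comparison to conclude. Your derivation of the equivalence between the two displayed forms is essentially correct once the first identity is in hand.
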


\begin{proof} By relation \eqref{eqautofuncrho}, we have $R>0$, since $\rho=0$ on $\partial M$. Taking the divergence of item (i) in Lemma \ref{yanod}, using Bochner formula, and \eqref{mainequation}, we get
	\begin{eqnarray*}
		div(\mathcal{L}_{\xi} \mathring{Ric}(\nabla \rho))&=&\langle\mathcal{L}_{\xi} \mathring{Ric},\nabla^2 \rho\rangle+\langle div\,(\mathcal{L}_{\xi} \mathring{Ric}),\nabla \rho\rangle\\
		&=&-(n-2)|\mathring{\nabla}^2 \rho|^2-(n-2)\mathring{Ric}(\nabla \rho,\nabla \rho). 
	\end{eqnarray*}
	
	Since $n\geq 3$, using  once more \eqref{mainequation}, integrating over $M,$ and applying the Divergence Theorem, one gets
	$$\int_{\partial M }\mathring{\nabla}^2\rho(\nabla \rho,\nu)d\sigma\,=\int_M\Big(|\mathring{\nabla}^2 \rho|^2+\mathring{Ric}(\nabla \rho,\nabla \rho)\Big)dM,$$ where $\nu$ is a unit outward normal vector field along $\partial M$.
	Nevertheless, choosing a local orthonormal frame $\{e_1,\ldots,e_n\}$, such that $e_n=\nu$, we have
	\begin{eqnarray*}
		\int_{\partial M }\mathring{\nabla}^2\rho(\nabla \rho,\nu)d\sigma\, &=&\int_{\partial M } \Big(\nabla^2 \rho(\nabla \rho,\nu)-\frac{\Delta \rho}{n} g(\nabla \rho,\nu)\Big)d\sigma\,\\
		&=& \int_{\partial M } \Big(\sum_{i=1}^{n}\rho_{i\nu}\rho_i-\frac{1}{n}\sum_{i=1}^{n}\rho_{ii} \rho_\nu\Big)d\sigma\,\\
		&=& \int_{\partial M }\Big(\sum_{i=1}^{n-1}\rho_{i\nu}\rho_i-\frac{1}{n}\sum_{i=1}^{n-1}\rho_{ii} \rho_\nu +\big(1-\frac{1}{n}\big)\rho_{\nu \nu}\rho_\nu \Big)d\sigma.
	\end{eqnarray*}
	Using that $\partial M$ is totally geodesic one gets
	$$\sum_{i=1}^{n-1}\rho_{ii}=\Delta^{\partial M}\rho \mbox{ and } \sum_{i=1}^{n-1}\rho_{i\nu}\rho_i=g(\nabla^{\partial M} \rho_\nu,\nabla^{\partial M}  \rho).$$
	
	Applying the Divergence Theorem on $\partial M$ we obtain
	\begin{eqnarray*}
		\int_{\partial M }\mathring{\nabla}^2\rho(\nabla \rho,\nu)d\sigma\, 
		&=& \int_{\partial M }\Big(g(\nabla^{\partial M} \rho_\nu,\nabla^{\partial M}  \rho)-\frac{1}{n}\rho_\nu\Delta^{\partial M}\rho+\big(1-\frac{1}{n}\big)\rho_{\nu \nu}\rho_\nu\Big)\,d\sigma\\
		&=& -\int_{\partial M }\Big(\rho\Delta^{\partial M} \rho_\nu -\frac{1}{n}\rho\,\Delta^{\partial M}\rho_\nu+\big(1-\frac{1}{n}\big)\rho_{\nu \nu}\rho_\nu\Big)\,d\sigma\\
		&=&\frac{(n-1)}{n}\int_{\partial M}\rho_{\nu \nu}\rho_\nu\,d\sigma,
	\end{eqnarray*}
	and therefore, 
	\begin{equation}\label{equaricnabl2}
		\int_M\Big(|\mathring{\nabla}^2 \rho|^2+\mathring{Ric}(\nabla \rho,\nabla \rho)\Big)dM=\frac{(n-1)}{n}\int_{\partial M}\rho_{\nu \nu}\rho_\nu\,d\sigma.
	\end{equation}

	Using relation \eqref{eqautofuncrho} in Bochner formula  and integrating over $M,$ we have
	\begin{eqnarray*}
		\dfrac{1}{2}\int_M {\rm div}(\nabla |\nabla \rho|^2)dM=\int_M \Big(|\nabla^2 \rho|^2+Ric(\nabla \rho,\nabla \rho)-\frac{R}{n-1}|\nabla \rho|^2\Big) dM.
	\end{eqnarray*}
	
	Since  ${\rm div} (\rho \nabla \rho)=-\dfrac{R}{n-1}\rho^2+|\nabla \rho|^2$ and $\rho=0$ on $\partial M$, we have
	\begin{eqnarray*}
		\frac{1}{2}\int_{\partial M} \langle\nabla |\nabla \rho|^2,\nu\rangle d\sigma=\int_M \Big(Ric(\nabla \rho,\nabla \rho)+ |\nabla^2 \rho|^2-(\Delta \rho)^2\Big)dM.
	\end{eqnarray*}
	
	Noting that $\mathring{Ric}(\nabla \rho,\nabla \rho)=Ric(\nabla \rho,\nabla \rho)-\dfrac{R}{n}|\nabla \rho|^2$, and $\displaystyle\int_M|\nabla \rho|^2dM=\frac{R}{n-1}\int_M \rho^2 dM$, we get
	\begin{eqnarray*}
		\int_{\partial M} \langle\nabla_{\nabla \rho} \nabla \rho ,\nu\rangle d\sigma=\int_M \Big(\mathring{Ric}(\nabla \rho,\nabla \rho)+ |\nabla^2 \rho|^2-\frac{1}{n}(\Delta \rho)^2\Big) dM.
	\end{eqnarray*}
	
	We already showed that
	$$\int_{\partial M} \langle\nabla_{\nabla \rho} \nabla \rho ,\nu\rangle \,d\sigma=
	\int_{\partial M }\Big(g(\nabla^{\partial M} \rho_\nu,\nabla^{\partial M}  \rho)+\rho_{\nu \nu}\rho_\nu\Big) d\sigma=\int_{\partial M }\rho_{\nu \nu}\rho_\nu \,d\sigma,$$
	hence,
	
	\begin{equation}\label{eqnabla1}
		\int_M \Big(\mathring{ Ric}(\nabla \rho,\nabla \rho)+ |\mathring{\nabla}^2 \rho|^2\Big) dM=\int_{\partial M }\rho_{\nu \nu}\rho_\nu \,d\sigma.
	\end{equation}
	
	By Equations~\eqref{equaricnabl2} and~\eqref{eqnabla1}, we achieve
$\displaystyle\int_M \Big(\mathring{ Ric}(\nabla \rho,\nabla \rho)+ |\mathring{\nabla}^2 \rho|^2\Big) dM=0,$ that finishes the proof of the proposition.
\end{proof}

Next, we present a useful   lemma  which is an important tool to prove our main results.

\begin{lemma}\label{lematensor} Let $\xi$ be a nontrivial conformal vector field on a Riemannian manifold $(M^n, g)$ with conformal factor $\rho$. Then
	\begin{enumerate}
		\item[(i)]\label{lemadiv} $
		{\rm div}\,(|\mathring{Ric}|^2\xi)=-2(n-2)\langle \nabla^2\rho,\mathring{Ric}\rangle+(n-4)\rho|\mathring{Ric}|^2.$
		\item[(ii)]\label{lemadiv2}  $
		{\rm div}\,(\rho|\mathring{Ric}|^2\xi)=|\mathring{Ric}|^2\langle\nabla \rho,\xi\rangle -2(n-2)\rho\langle \nabla^2 \rho,\mathring{Ric}\rangle+(n-4)\rho^2|\mathring{Ric}|^2.$
		\item [(iii)] \label{lemadiv3} $n\,{\rm div}\,(|\mathring{Ric}|^2\xi+2(n-2)\mathring{Ric}(\nabla \rho))={(n-2)^2}\langle \nabla R,\nabla \rho\rangle+n(n-4)\rho|\mathring{Ric}|^2.$
	\end{enumerate}
	
\end{lemma}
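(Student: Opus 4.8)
The three formulas are identities of Riemannian tensor calculus, and the plan is to prove them in the stated order, using as inputs only the defining relation \eqref{eqderxitens} for a closed conformal field (in particular ${\rm div}\,\xi=n\rho$, obtained by tracing \eqref{eqderxitens} and using that $\varphi$ is skew-symmetric), item (i) of Lemma \ref{yanod}, and the contracted second Bianchi identity.

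For item (i), I would begin with the Leibniz rule
\[
{\rm div}\,(|\mathring{Ric}|^2\xi)=\langle\nabla|\mathring{Ric}|^2,\xi\rangle+|\mathring{Ric}|^2\,{\rm div}\,\xi=\langle\nabla|\mathring{Ric}|^2,\xi\rangle+n\rho|\mathring{Ric}|^2 .
\]
Since $|\mathring{Ric}|^2$ is a function, $\langle\nabla|\mathring{Ric}|^2,\xi\rangle=\mathcal{L}_\xi|\mathring{Ric}|^2$. Writing $|\mathring{Ric}|^2=\mathring{R}_{ij}\mathring{R}_{kl}\,g^{ik}g^{jl}$ and using $\mathcal{L}_\xi g=2\rho g$, hence $\mathcal{L}_\xi g^{ik}=-2\rho g^{ik}$, the product rule gives
\[
\mathcal{L}_\xi|\mathring{Ric}|^2=2\langle\mathcal{L}_\xi\mathring{Ric},\mathring{Ric}\rangle-4\rho|\mathring{Ric}|^2 .
\]
Now item (i) of Lemma \ref{yanod} states $\mathcal{L}_\xi\mathring{Ric}=-(n-2)\mathring{\nabla^2}\rho$, and since $\mathring{Ric}$ is trace-free the pure-trace part of $\nabla^2\rho$ contributes nothing, so $\langle\mathcal{L}_\xi\mathring{Ric},\mathring{Ric}\rangle=-(n-2)\langle\nabla^2\rho,\mathring{Ric}\rangle$. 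Substituting back yields exactly ${\rm div}\,(|\mathring{Ric}|^2\xi)=-2(n-2)\langle\nabla^2\rho,\mathring{Ric}\rangle+(n-4)\rho|\mathring{Ric}|^2$.

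Item (ii) then follows at once by applying ${\rm div}\,(\rho V)=\rho\,{\rm div}\,V+\langle\nabla\rho,V\rangle$ with $V=|\mathring{Ric}|^2\xi$ and inserting item (i). For item (iii), I would compute
\[
{\rm div}\,(\mathring{Ric}(\nabla\rho))=\langle{\rm div}\,\mathring{Ric},\nabla\rho\rangle+\langle\mathring{Ric},\nabla^2\rho\rangle ,
\]
and use $\mathring{Ric}=Ric-\tfrac{R}{n}g$ together with the contracted second Bianchi identity ${\rm div}\,Ric=\tfrac12\nabla R$ to obtain ${\rm div}\,\mathring{Ric}=\tfrac{n-2}{2n}\nabla R$, hence ${\rm div}\,(\mathring{Ric}(\nabla\rho))=\tfrac{n-2}{2n}\langle\nabla R,\nabla\rho\rangle+\langle\mathring{Ric},\nabla^2\rho\rangle$. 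Multiplying this by $2(n-2)$ and adding it to item (i) cancels the $\langle\nabla^2\rho,\mathring{Ric}\rangle$ terms; multiplying the result by $n$ produces the claimed identity.

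There is no genuine obstacle here: the lemma is a bookkeeping computation. The one point that requires attention is that $|\mathring{Ric}|^2$ is a $g$-norm, so its Lie derivative picks up the term $-4\rho|\mathring{Ric}|^2$ coming from $\mathcal{L}_\xi g=2\rho g$; combined with the $+n\rho|\mathring{Ric}|^2$ from ${\rm div}\,\xi=n\rho$, this is precisely what generates the coefficient $n-4$, which is the feature that singles out dimension four in the applications of this lemma.
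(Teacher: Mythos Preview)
Your proposal is correct and follows essentially the same route as the paper: Leibniz rule plus ${\rm div}\,\xi=n\rho$, the identity $\mathcal{L}_\xi|\mathring{Ric}|^2=2\langle\mathcal{L}_\xi\mathring{Ric},\mathring{Ric}\rangle-4\rho|\mathring{Ric}|^2$ combined with Lemma~\ref{yanod}(i) for item (i), then the product rule for (ii) and the contracted Bianchi identity ${\rm div}\,\mathring{Ric}=\tfrac{n-2}{2n}\nabla R$ for (iii). The only cosmetic difference is that the paper quotes the formula $\mathcal{L}_\xi|\mathring{Ric}|^2=-2(n-2)\langle\nabla^2\rho,\mathring{Ric}\rangle-4\rho|\mathring{Ric}|^2$ from Yano, whereas you derive it yourself from $\mathcal{L}_\xi g^{ik}=-2\rho g^{ik}$ and Lemma~\ref{yanod}(i); your version is in fact slightly more self-contained.
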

\begin{proof}
	
	In fact, firstly we note that
	$${\rm div}\,(|\mathring{Ric}|^2 \xi)=\langle \nabla|\mathring{Ric}|^2,\xi\rangle+ n\rho|\mathring{Ric}|^2.$$ By Yano \cite{yano}, we have
	$$\mathcal{L}_{\xi}|\mathring{Ric}|^2=-2(n-2)\langle\nabla^2 \rho,\mathring{Ric} \rangle-4\rho |\mathring{Ric}|^2,$$ and using that $\mathcal{L}_{\xi}|\mathring{Ric}|^2=\langle \nabla|\mathring{Ric}|^2,\xi\rangle$, we obtain
	$${\rm div}\,(|\mathring{Ric}|^2\xi)=-2(n-2)\langle \nabla^2 \rho,\mathring{Ric}\rangle+(n-4)\rho|\mathring{Ric}|,$$
	which establishes (i). 
	
	On the other hand, from the first item we obtain 	$${\rm div}\,(\rho|\mathring{Ric}|^2\xi)=|\mathring{Ric}|^2\langle\nabla \rho,\xi\rangle-2(n-2)\rho\langle \nabla^2 \rho,\mathring{Ric}\rangle+(n-4)\rho^2|\mathring{Ric}|^2.$$
	Now using that  ${\rm div}(\mathring{Ric}(\nabla \rho))=({\rm div}\,\mathring{Ric})(\nabla \rho)+\langle \mathring{Ric},\nabla^2\rho\rangle,$ and ${\rm div}\,\mathring{Ric}=\dfrac{n-2}{2n}\nabla R,$ we have
	\begin{eqnarray*}
		\langle \mathring{Ric},\nabla^2\rho\rangle&=&	{\rm div}(\mathring{Ric}(\nabla \rho))-\dfrac{n-2}{2n}\langle \nabla R,\nabla \rho\rangle,
	\end{eqnarray*}and, by the first item, again, we infer
	$$
	{\rm div}\,(|\mathring{Ric}|^2\xi)=-2(n-2)\,{\rm div}(\mathring{Ric}(\nabla \rho))+\dfrac{(n-2)^2}{n}\langle \nabla R,\nabla \rho\rangle+(n-4)\rho|\mathring{Ric}|^2,$$ which implies that
	\begin{equation}
		n\,{\rm div}\,(|\mathring{Ric}|^2\xi+2(n-2)\mathring{Ric}(\nabla \rho))={(n-2)^2}\langle \nabla R,\nabla \rho\rangle+n(n-4)\rho|\mathring{Ric}|^2.
    \end{equation}Thus we finish the proof of the lemma.
\end{proof}

Now we present some properties  when we have a 4-dimensional manifold. 

\begin{proposition}\label{propdiv}
	Let $(M^4,g)$ be a Riemannian manifold with constant scalar curvature and $\xi$ be a nontrivial conformal vector field on $M^4$ with conformal factor $\rho$. If $f:M^4\to \mathbb{R}$ is a smooth function, then
	\begin{equation}\label{eqprodiv}
		{\rm div}\Big(f^m \big( |\mathring{Ric}|^2\xi+4 \mathring{Ric}(\nabla \rho)\big)\Big)=|\mathring{Ric}|^2\langle \xi,\nabla f^m \rangle+4\mathring{Ric}(\nabla \rho,\nabla f^m).
	\end{equation}
\end{proposition}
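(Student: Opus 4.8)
The plan is to reduce everything to the Leibniz rule for the divergence together with Lemma \ref{lematensor}(iii). Write $V := |\mathring{Ric}|^2\xi + 4\,\mathring{Ric}(\nabla \rho)$, so that the left-hand side is ${\rm div}(f^m V)$. By the product rule for the divergence of a function times a vector field,
\begin{equation*}
{\rm div}(f^m V) = f^m\,{\rm div}(V) + \langle \nabla f^m, V\rangle .
\end{equation*}
The second term expands, using the symmetry of $\mathring{Ric}$ (so that $\langle \nabla f^m, \mathring{Ric}(\nabla \rho)\rangle = \mathring{Ric}(\nabla f^m,\nabla \rho) = \mathring{Ric}(\nabla \rho,\nabla f^m)$), as exactly $|\mathring{Ric}|^2\langle \xi,\nabla f^m\rangle + 4\,\mathring{Ric}(\nabla \rho,\nabla f^m)$, which is the right-hand side of \eqref{eqprodiv}. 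Hence the entire content of the proposition is the claim that $V$ is divergence-free in this setting, i.e. ${\rm div}(V)=0$.

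To verify ${\rm div}(V)=0$, I would invoke Lemma \ref{lematensor}(iii), which for a nontrivial conformal vector field on $(M^n,g)$ reads
\begin{equation*}
n\,{\rm div}\big(|\mathring{Ric}|^2\xi + 2(n-2)\mathring{Ric}(\nabla \rho)\big) = (n-2)^2\langle \nabla R,\nabla \rho\rangle + n(n-4)\rho|\mathring{Ric}|^2 .
\end{equation*}
Specializing to $n=4$, the coefficient $2(n-2)=4$ matches the combination in $V$, the factor $n(n-4)$ vanishes identically, and the assumption that the scalar curvature $R$ is constant forces $\nabla R = 0$. Thus the right-hand side is zero and we conclude $4\,{\rm div}(V)=0$, hence ${\rm div}(V)=0$. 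Plugging this back into the Leibniz identity yields \eqref{eqprodiv}.

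There is no serious obstacle here: the proposition is essentially a repackaging of Lemma \ref{lematensor}(iii) for the distinguished dimension $n=4$, where the potentially troublesome $\rho|\mathring{Ric}|^2$ term drops out for free. The only points requiring a word of care are the use of the symmetry of the traceless Ricci tensor to identify $\langle \nabla f^m,\mathring{Ric}(\nabla\rho)\rangle$ with $\mathring{Ric}(\nabla\rho,\nabla f^m)$, and the observation that constancy of $R$ (not just of the scalar curvature being well-defined) is what is actually used to annihilate the $\langle\nabla R,\nabla\rho\rangle$ term; both are immediate.
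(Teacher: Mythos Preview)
Your argument is correct and coincides with the paper's own proof: both specialize Lemma~\ref{lematensor}(iii) to $n=4$ with $R$ constant to obtain ${\rm div}\big(|\mathring{Ric}|^2\xi+4\mathring{Ric}(\nabla\rho)\big)=0$, and then apply the Leibniz rule ${\rm div}(f^m V)=f^m{\rm div}(V)+\langle\nabla f^m,V\rangle$.
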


\begin{proof}
	In fact, by item (iii) of Lemma~\ref{lematensor}, since the scalar curvature is constant, and $n=4$, we have ${\rm div}( |\mathring{Ric}|^2\xi+4\mathring{Ric}(\nabla \rho))=0.$
	Hence,
	\begin{eqnarray*}
		{\rm div}\Big(f^m \big(|\mathring{Ric}|^2\xi+4\mathring{Ric}(\nabla \rho)\big)\Big)&=&\langle |\mathring{Ric}|^2\xi+4\mathring{Ric}(\nabla \rho),\nabla f^m\rangle\\
		&=&|\mathring{Ric}|^2\langle \xi,\nabla f^m \rangle+4\mathring{Ric}(\nabla \rho,\nabla f^m).
	\end{eqnarray*}
	\end{proof}
	
	Consequently, we have the following corollary.
	
\begin{corollary}\label{Mcorollary}
	Let $(M^4, g)$ be a Riemannian manifold with constant scalar curvature $R$, and let $\xi$ be a non trivial conformal vector field on $M^4$ with conformal factor $\rho$. If $M^4$ is either closed, or, compact with smooth boundary $\partial M$, where $\rho = 0$ on $\partial M$, then the following holds: $$\int_M \Big(|\mathring{Ric}|^2\langle \xi,\nabla \rho \rangle+4\mathring{Ric}(\nabla \rho,\nabla \rho)\Big)dM=0.$$
\end{corollary}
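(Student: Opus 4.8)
The plan is to integrate the divergence identity from Proposition~\ref{propdiv} over $M$ and use the Divergence Theorem, taking $f = \rho$ and $m = 1$. Specifically, applying \eqref{eqprodiv} with $f = \rho$, $m=1$ gives
\begin{equation*}
{\rm div}\Big(\rho\big(|\mathring{Ric}|^2\xi + 4\mathring{Ric}(\nabla \rho)\big)\Big) = |\mathring{Ric}|^2\langle \xi,\nabla \rho\rangle + 4\mathring{Ric}(\nabla \rho,\nabla \rho),
\end{equation*}
so it suffices to show that the left-hand side integrates to zero. In the closed case this is immediate from the Divergence Theorem. In the compact-with-boundary case, the boundary integral is
\begin{equation*}
\int_{\partial M}\rho\big(|\mathring{Ric}|^2\langle\xi,\nu\rangle + 4\langle\mathring{Ric}(\nabla\rho),\nu\rangle\big)\,d\sigma,
\end{equation*}
which vanishes because $\rho = 0$ on $\partial M$ by hypothesis. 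Hence the integral of the right-hand side is zero, which is exactly the claimed identity.

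The main (and essentially only) obstacle is making sure the hypotheses of Proposition~\ref{propdiv} are genuinely in force: one needs the scalar curvature to be constant and the dimension to be $4$, both of which are assumed in the statement of the corollary. Given that, the vector field $f^m\big(|\mathring{Ric}|^2\xi + 4\mathring{Ric}(\nabla\rho)\big)$ is smooth on all of $M$ (it extends smoothly to the boundary since $\rho$, $\xi$, and $\mathring{Ric}$ are all smooth up to $\partial M$), so the Divergence Theorem applies without any subtlety. The only point requiring a word of care is the boundary term, but the condition $\rho|_{\partial M} = 0$ kills it regardless of the behavior of $|\mathring{Ric}|^2$, $\xi$, or $\mathring{Ric}(\nabla\rho)$ there.

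Thus the proof will read: apply Proposition~\ref{propdiv} with $f=\rho$ and $m=1$ to obtain the pointwise identity above; integrate over $M$; invoke the Divergence Theorem; and observe that in both cases the boundary contribution vanishes (trivially when $\partial M = \emptyset$, and because $\rho|_{\partial M}=0$ otherwise). This yields
\begin{equation*}
\int_M \Big(|\mathring{Ric}|^2\langle \xi,\nabla \rho \rangle + 4\mathring{Ric}(\nabla \rho,\nabla \rho)\Big)\,dM = 0,
\end{equation*}
completing the argument. No delicate estimate or geometric input beyond what is already established is needed; the corollary is a direct consequence of the preceding proposition.
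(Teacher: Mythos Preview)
Your proposal is correct and follows exactly the route the paper intends: the corollary is stated immediately after Proposition~\ref{propdiv} with no separate proof, and your argument---take $f=\rho$, $m=1$ in \eqref{eqprodiv}, integrate, and use $\rho|_{\partial M}=0$ to kill the boundary term---is precisely the implied derivation.
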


\section{Proof of Theorems} \label{prooftheorems}

In this section, we will present the proofs of the theorems.

\subsection{Proof of Theorem \ref{thm1}}
Since $\mathring{Ric}(\nabla \rho)=0,$ by hypothesis, using 
Proposition \ref{propbochner}, we deduce 
$$\int_M |\mathring{\nabla} ^2 \rho|^2dM=0.$$

Therefore, by the hypothesis on $\rho$, we can apply [ \cite{reilly}, Theorem B] to ensure that $M$ is isometric to a geodesic ball on $\mathbb{S}^n(c)$. Since $\partial M$ is totally geodesic, we conclude that $M$ is isometric to a hemisphere  $\mathbb{S}^n_+(c)$.

\subsection{Proof of Theorem \ref{thm2}}

Using Bochner formula one gets 
$$\int_M \Big(|\nabla^2 \rho|^2+Ric(\nabla \rho,\nabla \rho)+\langle\nabla \Delta \rho,\nabla \rho \rangle\Big) dM=0.$$
Since  $R$ is constant, we have  $\Delta \rho=-\dfrac{R}{n-1}\rho$. Then,
$$\int_M \Big(|\nabla^2 \rho|^2+Ric(\nabla \rho,\nabla \rho)-\frac{R}{n-1}|\nabla \rho|^2\Big) dM=0.$$
Using once more that $\rho$ is an eigenfunction, we stablish that $\displaystyle\int_M |\nabla \rho|^2dM=\frac{R}{n-1}\int \rho^2 dM$. 

On the other hand, $\mathring{Ric}(\nabla \rho,\nabla \rho)=Ric(\nabla \rho,\nabla \rho)-\dfrac{R}{n}|\nabla \rho|^2$ and 
$|\mathring{\nabla}^2 \rho|^2=|\nabla^2 \rho|^2-\dfrac{1}{n}(\Delta \rho )^2$, give us   
$$\int_M \Big(\mathring{Ric}(\nabla \rho,\nabla \rho)+|\mathring{\nabla^2} \rho|^2\Big)dM=0.$$
Using  Corollary \ref{Mcorollary}, we  finally get
$$\int_M |\mathring{Ric}|^2\langle \xi,\nabla \rho \rangle dM =4\int |\mathring{\nabla}^2 \rho|^2dM.$$

Therefore, by the hypothesis we assume that $\displaystyle\int_M |\mathring{Ric}|^2\langle \xi,\nabla \rho \rangle dM\leq0.$ Whence we  can use  Obata’s result \cite{obata} to  conclude that $(M^4, g)$ is isometric to a standard sphere $\mathbb{S}^4(c)$. 

\subsection{Proof of Theorem \ref{thm3}}

In fact, by Proposition \ref{propbochner}, we have 
$$\int_M \Big(\mathring{Ric}(\nabla \rho,\nabla \rho)+|\mathring{\nabla}^2 \rho|^2\Big)dM=0.$$
Combining with Corollary \ref{Mcorollary}, we get 
$$\int_M |\mathring{Ric}|^2\langle \xi,\nabla \rho \rangle dM =4\int |\mathring{\nabla}^2 \rho|^2dM.$$

Therefore, by the hypothesis on $\rho$, we can apply [ \cite{reilly}, Theorem B] to conclude that $M$ is isometric to a geodesic ball on $\mathbb{S}^4(c)$. Since $\partial M$ is totally geodesic, we conclude that $M$ is isometric to a hemisphere  $\mathbb{S}^4_+(c)$.

\subsection{Proof of Theorem \ref{thm4}} 

Using the relation \eqref{5} and item (ii)  in Lemma \ref{lematensor},  we obtain
$${\rm div}\,(\rho|\mathring{Ric}|^2\xi)=|\mathring{Ric}|^2\langle\nabla \rho,\xi\rangle-n|\mathring{\nabla^2 \rho}|^2.$$
Integrating over $M$ and using the Divergence Theorem, one gets
\begin{equation}\label{abequation}
	\int_M |\mathring{Ric}|^2\langle\nabla \rho,\xi\rangle dM=n\int _M|\mathring{\nabla^2 \rho}|^2 dM.
\end{equation}

Hence, by Equations~\eqref{5} and~\eqref{abequation}, we have 
$$\int_M |\mathring{Ric}|^2 \psi\,dM = 0,$$ where $\psi= n\rho^2-\langle \nabla \rho, \xi\rangle$ as defined in the theorem. Since $\psi\neq 0$, we obtain  $| \mathring{Ric}|^2=0$.

Again, by relation \eqref{5} and the definition of $\mathring{\nabla^{2}\,\rho},$ yield
\begin{equation}\label{typeobataequation}
	\nabla^2 \rho =\frac{\Delta \rho}{n}g.
\end{equation}
Since $\Delta \rho=-\dfrac{R}{n-1}\rho$ and $\rho=0$ on $\partial M$,  it is not difficult to prove that $R>0$. Then,
$$\nabla^2 \rho =-\frac{R \rho}{n(n-1)}g.$$

Therefore, by the hypothesis on $\rho$, we can apply [ \cite{reilly}, Theorem B] to guarantee that $M$ is isometric to a geodesic ball on $\mathbb{S}^n(c)$. Since $\partial M$ is totally geodesic, we conclude that $M$ is isometric to a hemisphere  $\mathbb{S}^n_+(c)$.

\end{document}